\documentclass[12pt]{article}
\usepackage{amsmath,amsfonts,amssymb,amscd,verbatim,comment, amsthm}
\usepackage{fullpage}
\usepackage{color}
\usepackage{array}
\usepackage{enumerate}
\usepackage{xspace}
\usepackage{bm}
\usepackage[normalem]{ulem}
\usepackage{multirow}
\usepackage{tabularx}
\newcolumntype{C}{>{\centering\arraybackslash}X}

\definecolor{darkgreen}{RGB}{55,138,0}

\newcommand{\ba}{\mathbf{a}}
\newcommand{\bb}{\mathbf{b}}
\newcommand{\bc}{\mathbf{c}}
\newcommand{\bd}{\mathbf{d}}

\newcommand{\bx}{\mathbf{x}}
\newcommand{\by}{\mathbf{y}}

\newcommand{\bzero}{\mathbf{0}}

\newcommand{\kk}{\Bbbk}

\makeatletter
\newcommand{\leqnomode}{\tagsleft@true}
\newcommand{\reqnomode}{\tagsleft@false}
\makeatother

\numberwithin{equation}{section}
\theoremstyle{plain}
 
\newtheorem{theorem}[equation]{Theorem}

\newtheorem{proposition}[equation]{Proposition}

\theoremstyle{definition}
\newtheorem{definition}[equation]{Definition}

\newtheorem{remark}[equation]{Remark}

\newcommand\FF{\mathbb F}

\newcommand\ZZ{\mathbb Z}

\begin{document}

\title{Improved bounds on sizes of generalized caps in $AG(n,q)$}

\date{}

\author{Michael Tait\thanks{Villanova University, Department of Mathematics and Statistics, Villanova, PA 19085, \texttt{michael.tait@villanova.edu}. Michael Tait was supported in part by NSF grant DMS-2011553.} \and Robert Won\thanks{University of Washington, Department of Mathematics, Seattle, WA 98195, \texttt{robwon@uw.edu}. Robert Won was supported in part by an AMS--Simons Travel Grant.} } 


\maketitle

\renewcommand{\thefootnote}{\fnsymbol{footnote}} 
\footnotetext{\emph{2020 Mathematics Subject Classification.} 05B10, 05D99, 11B75, 51E15}     
\renewcommand{\thefootnote}{\arabic{footnote}} 

\begin{abstract} 
An $m$-general set in $AG(n,q)$ is a set of points such that any subset of size $m$ is in general position. A $3$-general set is often called a capset. In this paper, we study the maximum size of an $m$-general set in $AG(n,q)$, significantly improving previous results. When $m=4$ and $q=2$ we give a precise estimate, solving a problem raised by Bennett.
\end{abstract}

\section{Introduction}

Throughout, let $\FF_q$ denote the field with $q$ elements and let $n$ be a positive integer. Let $\FF_q^n$ be $n$-dimensional affine space over $\FF_q$, also denoted $AG(n,q)$. The $n$-dimensional projective space over $\FF_q$ is denoted $PG(n,q)$.
A $k$-dimensional affine subspace of a vector space is called a $k$-dimensional \emph{flat}.








An {\em affine combination} of $t$ points $\ba_1, \dots, \ba_t \in \FF_q^n$ is a linear combination $\sum_{j=1}^t c_j \ba_j$ where each $c_j \in \FF_q$ and $\sum_{j=1}^t c_j = 1$. A set of points is called \emph{affinely dependent} if one of the points is an affine combination of the others. Equivalently, $\{\ba_1, \dots, \ba_t\}$ is affinely dependent if there is a linear combination $\sum_{j=1}^t c_j \ba_j = \bzero$ where  $\sum_{j=1}^t c_j = 0$ and the $c_j$ are not identically zero.

\begin{definition}
Let $m$ be a positive integer satisfying $3 \leq m \leq n + 2$ and let $A \subseteq\FF_q^n$ have size at least $m$. Then $A$ is called \emph{$m$-general} if
no $m$ points of $A$ lie on a single $(m-2)$-dimensional flat.
Equivalently, $A$ is $m$-general if and only if any subset of $A$ of size $m$ is in general position.
\end{definition}

\begin{remark}We remark that if $A$ is $m$-general, then it is also $(m-1)$-general, so for each $k = 2, \dots, m$, no $k$ points of $A$ lie on a common $(k-2)$-dimensional flat---that is, every subset of $A$ of size $k \leq m$ is affinely independent.
\end{remark}

The term $m$-general set was introduced by Bennett \cite{B}. We adopt Bennett's terminology in this work, but note that $m$-general sets have been studied under different names. A $3$-general set in $\FF_3^n$ (i.e., when $q = m = 3$) is often called a \emph{cap} or \emph{capset}. 
In \cite{HTW}, an $m$-general set in $\FF_3^n$ was called an \emph{$(m-2)$-cap}.
In this paper, we will work in affine space, but capsets and their generalizations have also been studied in $PG(n,q)$ (see \cite{HS, Thas3, Thas4}). Indeed, as Bennett notes, an $m$-general set in $AG(n,q)$ is essentially the same as what was called an $(|A|; m, m-1; n,q)$-set or simply an $(|A|, m-1)$-set in $PG(n,q)$ in \cite{HS}.

For $m \geq 3$ an integer, let $r_{m}(n,q)$ denote the maximum size of an $m$-general set in $AG(n,q)$. Determining $r_3(n,3)$, the maximum size of a capset in $\FF_3^n$, is a notoriously difficult problem: the exact value of $r_3(n,3)$ is known only when $n \leq 6$ \cite{P4, ELS5, P6}. However, the asymptotic behavior of $r_3(n,3)$ has been studied extensively. In \cite{M}, Meshulam proved that $r_3(n,3)$ is $O(3^n/n)$; this was improved by Bateman and Katz to $O(3^n/n^{1+\epsilon})$ for some fixed $\epsilon$ which is independent of $n$ \cite{BK}.
As in \cite{B}, let
\[ \mu_m(q) = \limsup_{n \to \infty} \frac{\log_q(r_m(n,q))}{n}.
\]
The best-known lower bound for $\mu_3(3)$ is $0.724851$ due to Edel \cite{Elower}. The aforementioned upper bounds on $r_3(n,3)$ do not improve upon the trivial upper bound $\mu_3(3) \leq 1$. 

In recent breakthrough work, Ellenberg and Gijswijt \cite{EG} (adapting a use of the polynomial method from Croot, Lev and Pach \cite{CLP}) proved that $r_3(n,3)$ is $O(2.756^n)$ and therefore that $\mu_3(3) \leq 0.923$. Further, they showed that $\mu_3(q) < 1$ for every prime power $q$. In \cite{B}, Bennett extended the methods of Ellenberg and Gijswijt to obtain bounds on $\mu_m(q)$ for every $m$ and $q$. In particular, Bennett proved the following theorem.

\begin{theorem}[Bennett, {\cite[Theorem 1.2]{B}}]\label{bennett theorem}
Let $n$ be a positive integer, $q$ a prime power, and $m$ an integer such that $3\leq m\leq n+2$. Suppose also that $q$ is odd, or $m$ and $q$ are both even. Then 
\[ r_m(n, q) < 2m + m \cdot \min_{t \in (0,1)} \left( t^{-\frac{q-1}{m}} \cdot \frac{1-t^q}{1-t} \right)^n.
\]
\end{theorem}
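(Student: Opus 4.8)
The plan is to adapt the polynomial (slice-rank) method of Ellenberg and Gijswijt \cite{EG}, built on Croot--Lev--Pach \cite{CLP}, from three-term patterns to the affine dependence of $m$ points. Suppose $A \subseteq \FF_q^n$ is $m$-general with $|A| = N$. First I would fix coefficients $c_1, \dots, c_m \in \FF_q \setminus \{0\}$ with $\sum_{j=1}^m c_j = 0$; the parity hypothesis ($q$ odd, or $m$ and $q$ both even) is precisely what guarantees that $m$ nonzero field elements can be chosen to sum to zero (e.g. alternating $\pm 1$ when $m$ is even, or $1,1,-2$ padded by $\pm 1$ pairs when $q$ is odd and $m$ is odd). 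Because $\sum_j c_j = 0$ with the $c_j$ not all zero, any $m$ points $\ba_1, \dots, \ba_m$ satisfying $\sum_j c_j \ba_j = \bzero$ are affinely dependent, so the equation $\sum_j c_j \ba_j = \bzero$ encodes a configuration forbidden by $m$-generality.

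With this in hand I would introduce the order-$m$ tensor on $A^m$
\[
T(\ba_1, \dots, \ba_m) \;=\; \prod_{i=1}^n \Bigl( 1 - \bigl( \textstyle\sum_{j=1}^m c_j (\ba_j)_i \bigr)^{q-1} \Bigr),
\]
where $(\ba_j)_i$ denotes the $i$-th coordinate, so that $T$ is the indicator of $\sum_j c_j \ba_j = \bzero$. On the diagonal $T(\ba, \dots, \ba) = \prod_i(1 - 0) = 1$ since $\sum_j c_j = 0$, and on an $m$-tuple of \emph{distinct} points $T$ vanishes, as such a tuple would be an affinely dependent $m$-set. The delicate point---and the step I expect to be the main obstacle---is to rule out the intermediate tuples in which some but not all of the $\ba_j$ coincide: grouping indices by the common value of $\ba_j$ shows that such a solution forces the coefficient sequence to split into value-classes each summing to zero, so I would choose $(c_j)$ to be a \emph{minimal} zero-sum sequence (no proper nonempty subsequence sums to zero), which together with $m$-generality (applied to the $k \le m$ distinct values) forces all coordinates to be equal. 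This reduces $T$ to a genuinely diagonal tensor, and by the slice-rank lemma for diagonal tensors its slice rank equals $N = |A|$.

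For the upper bound on the slice rank I would expand $T$ as an $\FF_q$-linear combination of monomials $\prod_{j=1}^m \prod_{i=1}^n (\ba_j)_i^{e_{ij}}$, reducing exponents into $\{0, \dots, q-1\}$ via $x^q = x$; since $T$ is a product of $n$ factors of degree $q-1$, every monomial has total degree $\sum_{i,j} e_{ij} \le (q-1)n$. Writing $B_j = \sum_i e_{ij}$ for the degree in the $j$-th block of variables, $\sum_j B_j \le (q-1)n$ forces some block $B_{j^*} \le (q-1)n/m$. Collecting, for each $j^*$, the monomials whose $j^*$-block part is a fixed low-degree monomial exhibits $T$ as a sum of slice-rank-$1$ tensors, one for each block index and each admissible low-degree monomial; hence
\[
\operatorname{slicerank}(T) \;\le\; m \cdot M_d, \qquad M_d := \#\{ (e_1, \dots, e_n) : 0 \le e_i \le q-1,\ \textstyle\sum_i e_i \le d \}, \quad d = \tfrac{(q-1)n}{m}.
\]
Combining the two bounds gives $N \le m\, M_d$ up to the lower-order degenerate tuples absorbed into the additive $2m$.

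Finally I would estimate $M_d$ by the standard generating-function (Chernoff) trick: for every $t \in (0,1)$,
\[
M_d \;\le\; \sum_{e} t^{\sum_i e_i - d} \;=\; t^{-d} \prod_{i=1}^n \sum_{e_i=0}^{q-1} t^{e_i} \;=\; \Bigl( t^{-\frac{q-1}{m}} \cdot \frac{1 - t^q}{1 - t} \Bigr)^n,
\]
and optimizing over $t$ yields the claimed bound $r_m(n,q) < 2m + m \min_{t \in (0,1)} (t^{-(q-1)/m}(1-t^q)/(1-t))^n$. The crux of the whole argument is the diagonalization step: controlling the partially-coincident solutions so that $T$ is exactly diagonal, which is where both the choice of a minimal zero-sum coefficient sequence and the parity hypothesis are essential; when a single linear equation cannot be diagonalized one would replace slice rank by Naslund's partition rank, splitting the variables into blocks rather than singletons while retaining the same degree count.
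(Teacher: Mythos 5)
Your reduction of Bennett's theorem to a slice-rank computation is the standard one, and your instinct that the partially-coincident tuples are ``the main obstacle'' is exactly right; the problem is that your proposed fix provably does not exist for most parameters covered by the theorem. Your own grouping argument shows that the tensor $T$ is diagonal on $A^m$ \emph{if and only if} $(c_1,\dots,c_m)$ is a minimal zero-sum sequence of nonzero elements of $\FF_q$: any proper nonempty zero-sum subsequence $S$ has a zero-sum complement, and assigning one value of $A$ to $S$ and a different value to its complement produces an off-diagonal point of the support that $m$-generality does not forbid. But minimal zero-sum sequences over the additive group $(\FF_q,+)\cong(\ZZ/p\ZZ)^a$ have length at most the Davenport constant $a(p-1)+1$ (Olson's theorem for $p$-groups), so for $m>a(p-1)+1$ \emph{no} admissible coefficient vector exists. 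Concretely, take $q=3$, $m=4$, a case squarely inside the theorem ($q$ odd): the only multiset of four nonzero elements of $\FF_3$ summing to $0$ is $\{1,1,2,2\}$, which contains the zero-sum proper subsequence $\{1,2\}$; for the forced choice $\bc=(1,1,2,2)$, every tuple $(\ba,\bb,\ba,\bb)$ with $\ba\neq\bb$ lies in the support of $T$, so $T$ is far from diagonal and Tao's diagonal-tensor lemma gives no lower bound on its slice rank. The same happens for $q=2$, $m=4$, where $\bc=(1,1,1,1)$ is forced. Thus your argument covers only the range $m\le a(p-1)+1$ (for $q=3$, only the already-known Ellenberg--Gijswijt case $m=3$), while the theorem allows all $3\le m\le n+2$, and the regime the paper cares about is precisely $q$ fixed and $m$ large. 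Neither escape hatch closes the gap: the off-diagonal support has size on the order of $|A|^{\lfloor m/2\rfloor}$ and the natural correction tensors (products of equality indicators, such as the one supported on the tuples $(\ba,\bb,\ba,\bb)$) admit no slice-rank bound better than order $|A|$, which is the very quantity being bounded, so they cannot be ``absorbed into the additive $2m$''; and partition rank does not help, because what is missing is a rank \emph{lower} bound for a tensor with non-diagonal support, which partition rank likewise only supplies in the diagonal situation.

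A secondary error: the parity hypothesis is not ``precisely what guarantees that $m$ nonzero field elements can be chosen to sum to zero.'' In $\FF_4$ one has $1+\omega+\omega^2=0$, so such coefficients exist for $q=4$, $m=3$ even though that case is excluded, and in fact nonzero coefficients summing to zero exist for every $q>2$ and every $m\ge3$ (and for $q=2$ exactly when $m$ is even); so Bennett's hypothesis must be doing its work in exactly the step you have not supplied, the treatment of degenerate solutions. Finally, for context: the paper never proves Theorem~\ref{bennett theorem} --- it quotes it from \cite{B} --- so there is no in-paper proof to compare against; the closest relative is Theorem~\ref{counting upper bound q large}, which deliberately abandons the polynomial method in favor of an elementary counting argument (fix $k=\lfloor m/2\rfloor$, order the $k$-tuples, and count distinct values of $\sum_j\alpha_j\bx_j$), precisely because that route sidesteps the degenerate-solution obstruction; as the paper shows, this counting bound actually beats the polynomial-method bound of \cite{B} in most regimes.
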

\noindent As a corollary, Bennett showed that
\[ \mu_m(q) \leq 1 - \log_q \left(\frac{m^2 - \alpha m + \alpha}{m e^{1-\frac{\alpha}{m}}} \right) + O((q\log q)^{-1})
\]
for some $\alpha$ which depends on $m$.

The case when $q = m = 3$ (the capset problem) has garnered particular interest for several reasons. One reason is because capsets are connected to coding theory (see e.g., \cite{codingbook, HS}). A second reason is the following connection to number theory. In $\FF_3^n$, three points lie on a line if and only if they form a three-term arithmetic progression and so a $3$-general set in $\FF_3^n$ can be equivalently described as a set which contains no three-term arithmetic progressions. The analogous problem of studying subsets $A$ in $\ZZ/N\ZZ$ (or $[N]$) avoiding three-term arithmetic progressions was studied by Roth \cite{Roth} who showed that $|A| = o(N)$. The best-known upper bound is due to Bloom \cite{Bloom} who proved that $|A| = O(N(\log \log N)^4/\log N)$.

Similarly, while the definition of an $m$-general set is inherently geometric, we will show that it admits an arithmetic formulation.
In \cite[Theorem 3.2]{HTW}, it was shown that $4$-general sets in $\FF_3^n$ also have an arithmetic interpretation; namely, $4$-general sets are Sidon sets. A subset $A$ of an abelian group $G$ is called a {\em Sidon set} if the only solutions to $a + b  =  c + d$ with $a,b,c,d \in A$ are the trivial solutions when $(a,b)$ is a permutation of $(c,d)$.

In this paper, we show that $m$-general sets in $\mathbb{F}_q^n$ can be characterized arithmetically for any $q$, $m$, and $n$ in Theorem~\ref{thm.general.eqns}. Using this arithmetic characterization and a counting argument, we give a general upper bound on the size of an $m$-general set in Theorem~\ref{counting upper bound q large}. In Section~\ref{comparision section}, we show that our upper bound significantly improves upon previous results. Finally, in Section~\ref{even section}, we answer a question raised by Bennett \cite{B} in the case when $m=4$ and $q=2$. We conclude by giving some open problems and suggesting a possible approach to making progress on the capset problem in Section~\ref{conclusion}.

\section{Arithmetic conditions for $m$-general sets}

The impetus for this work is to understand the geometric notion of an $m$-general set via arithmetic conditions. As mentioned above, $3$-general sets in $\FF_3^n$ are equivalent to sets which avoid three-term arithmetic progressions. We remark that when $q > 3$, the two notions are not equivalent: e.g., $\{1,2,4\} \subseteq \FF_5$ does not contain a three-term arithmetic progression but is not $3$-general.
In \cite[Theorem 3.2]{HTW}, Huang and the authors showed that $4$-general sets in $\FF_3^n$ are Sidon sets; in this work, it will be useful to consider the following generalization of the notion of a Sidon set.

\begin{definition}
{Let $k$ be a positive integer and let $G$ be an abelian group. A subset $A \subseteq G$ is called a \emph{weak $B_k$ set} if the only solutions to the equation
\[\label{eqn.Bk} a_1 + \cdots + a_k = b_1  + \cdots + b_k 
\]
where $a_1,\dots, a_k$ are distinct elements of $A$ and $b_1,\dots, b_k$ are distinct elements of $A$
are the trivial solutions where $(a_1, \dots, a_k)$ is a permutation of $(b_1, \dots, b_k)$.}

The set $A$ is called simply a \emph{$B_k$ set} if the condition on $a_1, \dots, a_k\in A$ and $ b_1, \dots, b_k\in A$ being distinct is removed from the definition.
\end{definition}

A subset of an abelian group is a Sidon set if and only if it is a $B_2$ set. Understanding the size of $B_k$ sets in the integers is a well-studied problem (see e.g., \cite{Sidon, Ruzsa, CBh, Green, SS} and O'Bryant's Dynamic Survey \cite{OSurvey} on Sidon sets). In \cite{HTW}, by analyzing $B_2$ sets in $\FF_3^n$, we were able to exactly determine $\mu_4(3)$.
We remark that when $q > 3$, a $4$-general set in $\FF_q^n$ is not equivalent to a $B_2$ set, e.g., the subset
\[ \{ (1,0), (0,1), (2,0), (0,2) \} \subseteq \FF_5^2
\]
is a $B_2$ set which fails to be $4$-general.
In general, being a $B_k$ set is a strong condition. For example, a two-element subset $\{\ba, \bb\}$ of $\FF_3^n$ is not a $B_3$ set  since $\ba + \ba + \ba = \bb + \bb + \bb$. There is, however, a relationship between $m$-general sets and \emph{weak} $B_k$ sets, as the following result demonstrates.
\begin{proposition}Let $m \geq 3$ be a positive integer and let $A \subseteq \FF_q^n$. If $A$ is $m$-general, then for all positive integers $1 \leq k \leq \lfloor m/2 \rfloor$, $A$ is a weak $B_k$ set.
\end{proposition}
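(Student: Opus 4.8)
The plan is to argue by contradiction, using the fact recorded in the Remark that $m$-generality is equivalent to every subset of $A$ of size at most $m$ being affinely independent. Suppose then that $A$ is $m$-general but fails to be a weak $B_k$ set for some $k$ with $1 \le k \le \lfloor m/2 \rfloor$. By definition there is a nontrivial solution
\[ a_1 + \cdots + a_k = b_1 + \cdots + b_k, \]
in which $a_1, \dots, a_k$ are distinct elements of $A$, the $b_1, \dots, b_k$ are distinct elements of $A$, and $(a_1,\dots,a_k)$ is not a permutation of $(b_1,\dots,b_k)$. Since each tuple consists of distinct points, nontriviality is precisely the statement that $\{a_1, \dots, a_k\} \neq \{b_1, \dots, b_k\}$ as sets.

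The first step is to cancel the points common to both sides. Let $C = \{a_1,\dots,a_k\} \cap \{b_1,\dots,b_k\}$, and set $S' = \{a_1,\dots,a_k\}\setminus C$ and $T' = \{b_1,\dots,b_k\}\setminus C$. Subtracting $\sum_{c \in C} c$ from both sides of the equation yields
\[ \sum_{a \in S'} a = \sum_{b \in T'} b, \]
where $S'$ and $T'$ are disjoint and $|S'| = |T'| =: j$. Because the two original sets were unequal we have $j \geq 1$, and since $S'$ and $T'$ are disjoint all $2j$ points of $S' \cup T'$ are distinct.

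Rearranging gives $\sum_{a \in S'} a - \sum_{b \in T'} b = \bzero$, a linear combination of the $2j$ distinct points of $S' \cup T'$ with coefficient $+1$ on each point of $S'$ and $-1$ on each point of $T'$. These coefficients are not identically zero (indeed $-1 \neq 0$ in every characteristic, including $q$ even) and sum to $j - j = 0$, so the points of $S' \cup T'$ are affinely dependent. On the other hand $2j \leq 2\lfloor m/2\rfloor \leq m$, so $S' \cup T'$ is a subset of $A$ of size between $2$ and $m$, which by $m$-generality must be affinely independent. This contradiction completes the argument.

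The proof is short, and I expect no serious obstacle; the only point requiring genuine care is the cancellation step. One must verify that discarding the common points leaves two \emph{nonempty}, disjoint sets of equal size, so that the resulting relation is an affine dependence on between $2$ and $m$ distinct points with nonzero coefficients summing to zero. The inequality $2j \leq m$ is where the hypothesis $k \leq \lfloor m/2 \rfloor$ enters, and it is exactly what lets us invoke the affine-independence consequence of $m$-generality.
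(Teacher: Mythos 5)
Your proof is correct and takes essentially the same approach as the paper's: both arguments run the contrapositive, converting a nontrivial solution of $a_1 + \cdots + a_k = b_1 + \cdots + b_k$ into an affine dependence among at most $2k \leq m$ distinct points of $A$. The only cosmetic difference is bookkeeping for the overlap between the two sides---you cancel the common points to obtain a $\pm 1$-coefficient dependence on disjoint nonempty sets, while the paper instead isolates a point $\ba_i$ not appearing among the $\bb_j$ and writes it as an affine combination of the remaining points; both handle the overlap correctly.
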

\begin{proof}
Suppose that $A$ is a subset of $\FF_q^n$ which is not a weak $B_k$ set for some $1 \leq k \leq \lfloor m/2 \rfloor$. Being a weak $B_1$ set is a trivial condition, so we may assume $k \geq 2$. Then there exist distinct $\ba_1,  \dots, \ba_k \in A$ and distinct $\bb_1, \dots, \bb_k \in A$ such that 
\[\ba_1 + \dots + \ba_k = \bb_1 + \dots + \bb_k
\]
and $(\ba_1, \dots, \ba_k)$ is not a permutation of $(\bb_1, \dots, \bb_k)$. Hence, there is some index $1 \leq i \leq k$ such that $\ba_i \neq \bb_j$ for all $1 \leq j \leq k$. Then, we have that
\[ \ba_i = \bb_1 + \cdots + \bb_k - \ba_1 - \cdots - \ba_{i-1} - \ba_{i+1} - \cdots - \ba_k
\]
so $\ba_i$ is a nontrivial affine combination of the rest of the points. Therefore the set \[\{ \ba_1 ,\dots ,\ba_k , \bb_1, \dots, \bb_k\}\]
is not affinely independent and so $A$ is not $m$-general.
\end{proof}

{The converse to the above result does not hold. As an example, in the case that $m = 3$ and $q \geq 3$, a weak $B_1$ set is just a set, while being $3$-general is a nontrivial condition. As mentioned above, when $m = 4$ and $q = 3$, a $4$-general set is a $B_2$ set, which is stronger than being a weak $B_2$ set. Below, we give exact arithmetic conditions which are equivalent to being $m$-general.
In order to do this, we first need some definitions. }

\begin{definition}
Given a function $f(x_1,\dots, x_k): (\FF_q^n)^k \to \FF_q^n$, we say that a subset $A$ of $\FF_q^n$ {\em weakly avoids} the equation $f=\bzero$ if, whenever $\ba_1, \ba_2, \dots, \ba_k$ are $k$ distinct elements of $A$, we have $f(\ba_1, \ba_2, \dots, \ba_k) \neq \bzero$.
\end{definition}


Let $t \geq 3$ be a positive integer and let $\mathbf{c} = (c_1, c_2,\dots, c_t) \in \mathbb{F}_q^{t}$ be a vector of coefficients. Define a function $f_{\bc}: (\FF_q^n)^t \to \FF_q^n$ by
\[
f_\mathbf{c}(x_1,x_2,\dots, x_t) = c_1  x_1 + c_2 x_2 + \cdots + c_t  x_t.
\]
Further define
\[\mathcal{C}_0^{t}:= \{(c_1,\dots, c_t) \in \mathbb{F}_q^{t} :  c_1+\cdots +c_t = 0 \text{ with $c_1, \dots, c_t$ not identically $0$}\},\]
the set of nonzero vectors of coefficients of length $t$ whose entries sum to $0$.



\begin{remark}
With this notation, a set in an abelian group which is three-term arithmetic progression-free is one that weakly avoids the equation \[f_{(1,1,-2)}(x_1,x_2,x_3) = x_1 + x_2 - 2 x_3 = 0
\]
while a weak $B_k$ set weakly avoids the equation
\[ f_{(1,1, \dots, 1, -1, -1, \dots ,-1)}(x_1, \dots, x_{2k}) = 0.
\]
In \cite{Ruzsa}, Ruzsa observed this connection and studied subsets of $[N]$ which avoid the equation $\sum_{j=1}^t c_j x_j = 0$ where the $c_j$ are integers which sum to $0$. The difficulty of these problems is highly variable depending on the vector $(c_1,\dots, c_t)$. For example, the best bounds on the largest size of a subset of $[N]$ avoiding $f_{(2,2,-3,-1)}=0$ are frustratingly far apart \cite{Ruzsa}.
\end{remark}

\begin{theorem}
\label{thm.general.eqns}
Let $m$ be a positive integer satisfying $3 \leq m \leq n$ and let $A \subseteq \FF_q^n$ have size at least $m$. Then $A$ is $m$-general if and only if for all $\mathbf{c}\in \mathcal{C}_0^{m}$ we have that $A$ weakly avoids $f_\mathbf{c}(x_1,\dots,x_m) = \bzero$.
\end{theorem}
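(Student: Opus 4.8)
The plan is to prove both implications by directly unwinding the two definitions, the logical hinge being the elementary equivalence that $m$ distinct points $\ba_1,\dots,\ba_m \in A$ are affinely dependent if and only if there exists some $\mathbf{c} = (c_1,\dots,c_m) \in \mathcal{C}_0^m$ with $f_{\mathbf{c}}(\ba_1,\dots,\ba_m) = c_1\ba_1 + \cdots + c_m \ba_m = \bzero$. This is immediate from the stated definition of affine dependence: a vanishing combination $\sum_j c_j \ba_j = \bzero$ with $\sum_j c_j = 0$ and the $c_j$ not all zero is exactly a witnessing vector $\mathbf{c} \in \mathcal{C}_0^m$, and conversely. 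Once this equivalence is fixed, each direction is a matter of quantifier bookkeeping over the $m$-element subsets of $A$.

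For the forward direction I would argue directly. Assume $A$ is $m$-general and fix any $\mathbf{c} \in \mathcal{C}_0^m$. Given any $m$ distinct points $\ba_1,\dots,\ba_m \in A$, the hypothesis makes them affinely independent, so no nonzero coefficient vector summing to zero can annihilate them; since $\mathbf{c}$ is precisely such a vector, $f_{\mathbf{c}}(\ba_1,\dots,\ba_m) \neq \bzero$. As the points were arbitrary, $A$ weakly avoids $f_{\mathbf{c}} = \bzero$, and since $\mathbf{c} \in \mathcal{C}_0^m$ was arbitrary this gives the claim.

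For the reverse direction I would use the contrapositive. Suppose $A$ is not $m$-general, so some $m$ distinct points $\ba_1,\dots,\ba_m$ of $A$ lie on a common $(m-2)$-flat and hence are affinely dependent. By the equivalence above this produces a vector $\mathbf{c} \in \mathcal{C}_0^m$ with $f_{\mathbf{c}}(\ba_1,\dots,\ba_m) = \bzero$ evaluated on distinct points, so $A$ fails to weakly avoid $f_{\mathbf{c}} = \bzero$. Contrapositively, if $A$ weakly avoids $f_{\mathbf{c}} = \bzero$ for every $\mathbf{c} \in \mathcal{C}_0^m$, then $A$ is $m$-general.

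The one place I expect to need care---rather than any real difficulty---is reconciling the distinctness and indexing conventions. The definition of \emph{weakly avoids} constrains only tuples of $m$ distinct points, which correctly mirrors that $m$-general is a statement about $m$-element subsets. Because $\mathcal{C}_0^m$ allows some (though not all) coordinates to vanish, an affine dependence supported on only $k < m$ of the points is still recorded, after padding its coefficient vector with zeros, by a genuine element of $\mathcal{C}_0^m$; thus no dependence escapes detection. Finally, since $\mathcal{C}_0^m$ is invariant under permuting coordinates, quantifying over ordered tuples (as in \emph{weakly avoids}) agrees with quantifying over unordered subsets (as in $m$-general), so the two universally quantified conditions coincide.
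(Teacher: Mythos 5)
Your proof is correct and takes essentially the same approach as the paper's: both directions are definitional unwindings hinging on the equivalence between affine dependence of distinct points and the existence of an annihilating coefficient vector in $\mathcal{C}_0^m$. The only cosmetic difference is that the paper's converse direction passes through a dependent subset of size $t$ with $3 \leq t \leq m$ and pads its coefficient vector with zeros, whereas you work directly with $m$ points---a reconciliation your final paragraph already handles.
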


\begin{proof}
First, assume that for some $\mathbf{c} = (c_1, \dots, c_m) \in \mathcal{C}_0^{m}$, that $A$ does not weakly avoid $f_\mathbf{c}(x_1,\dots, x_m) = \bzero$. Then there are $m$ distinct elements $\ba_1,\dots, \ba_m \in A$ so that $\sum_{j=1}^m c_j \ba_j = \bzero$ with the $c_j$ not identically zero but $\sum_{j=1}^m c_j = 0$. Therefore, $\{\ba_1, \dots, \ba_m\}$ is affinely dependent so $A$ is not an $m$-general set.

Conversely, assume that $A$ is not an $m$-general set. Then for some $3 \leq t\leq m$, $A$ contains $t$ distinct points $\ba_1, \dots, \ba_t$ which are affinely dependent. That is, there exist coefficients $c_1,\dots, c_t \in \FF_q$ so that $\sum_{j=1}^t c_j \ba_j = \bzero$ with $\sum_{j=1}^t c_j = 0$ and the $c_j$ are not identically zero. Letting $\mathbf{c} = (c_1,\dots, c_t, 0, \dots , 0) \in \FF_q^m$, and choosing any other $m-t$ distinct points $\ba_{t+1}, \dots, \ba_m$ of $A$, we have that $\mathbf{c} \in \mathcal{C}_0^{m}$ and $f_{\bc}(\ba_1, \dots, \ba_m) = \bzero$ so $A$ does not weakly avoid $f_\mathbf{c}(x_1,\dots, x_m) = \bzero$.
\end{proof}

\begin{remark}
We remark that if a set $A$ weakly avoids $f_{\bc}(x_1, \dots, x_m) = \bzero$ for all $\bc \in \mathcal{C}_0^m$, then for all $1 \leq t \leq m$, by considering those coefficient vectors whose last $m-t$ entries are identically zero, we have that $A$ also weakly avoids $f_{\bd}(x_1, \dots, x_t) = \bzero$ for all $\bd \in \mathcal{C}_0^t$
\end{remark}

\section{A bound on the size of an $m$-general set}

{We are now prepared to prove our main result, which is a bound on the size of an $m$-general set in $\FF_q^n$.}

\begin{theorem}\label{counting upper bound q large}
Let $m\geq 4$ be an integer, $q$ be a prime power, and $k = \lfloor m/2 \rfloor$. If $A$ is an $m$-general set in $\mathbb{F}_q^n$ then 
\[
|A| \leq \frac{k\cdot q^{n/k}}{(q-1)^{1-2/k}(q-2)^{1/k}} \quad \text{if} \quad q > 2
\]
and
\[
|A| \leq (k!)^{1/k}\cdot 2^{n/k} + k \quad \text{if} \quad  q = 2.
\]
\end{theorem}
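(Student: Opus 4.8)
The plan is to bound $|A|$ by constructing an injective map from a family of \emph{weighted $k$-subset sums} of $A$ into $\FF_q^n$ and then counting the domain. Set $k=\lfloor m/2\rfloor$, so that $2k\le m$. The only structural input I need is that, since $A$ is $m$-general, every subset of $A$ of size at most $m$ is affinely independent (equivalently, by Theorem~\ref{thm.general.eqns}, $A$ weakly avoids $f_{\bc}=\bzero$ for every $\bc\in\mathcal{C}_0^{t}$ with $t\le m$); this is precisely what will forbid collisions below.

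Fix a scalar $\sigma\in\FF_q$ and let $\mathcal{D}$ be the set of pairs $(S,\lambda)$ where $S$ is a $k$-subset of $A$ and $\lambda\colon S\to\FF_q^\times$ is a choice of \emph{nonzero} coefficients with $\sum_{a\in S}\lambda(a)=\sigma$. Define $\Phi(S,\lambda)=\sum_{a\in S}\lambda(a)\,a\in\FF_q^n$. The heart of the argument is that $\Phi$ is injective. Indeed, a collision $\Phi(S,\lambda)=\Phi(T,\mu)$ gives $\sum_{a\in S}\lambda(a)a-\sum_{b\in T}\mu(b)b=\bzero$; grouping equal points, the coefficient attached to a point $p$ is $\lambda(p)-\mu(p)$ on $S\cap T$ and $\pm$ its nonzero weight on the symmetric difference. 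These coefficients sum to $\sigma-\sigma=0$, they are supported on at most $|S|+|T|=2k\le m$ distinct points of $A$, and if $(S,\lambda)\ne(T,\mu)$ they are not all zero (a point of $S\triangle T$ survives with a nonzero coefficient). A short check rules out such a relation being supported on one or two points, so it is a genuine affine dependence among at least three and at most $m$ distinct points of $A$, contradicting $m$-generality. Requiring $\lambda$ to be nowhere zero is essential: it guarantees each point of $S$ is genuinely used, so distinct pairs really do produce distinct relations.

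Injectivity yields $|\mathcal{D}|\le q^n$, and since the inner count does not depend on $S$ we have $|\mathcal{D}|=\binom{|A|}{k}\,N_\sigma$, where $N_\sigma=\#\{(\lambda_1,\dots,\lambda_k)\in(\FF_q^\times)^k:\textstyle\sum_i\lambda_i=\sigma\}$. A standard computation (inclusion–exclusion, or a one-line additive-character calculation) gives $N_\sigma=\tfrac1q\big((q-1)^k-(-1)^k\big)$ for $\sigma\ne0$, together with the analogous value at $\sigma=0$; in either case one checks the clean lower bound $N_\sigma\ge(q-1)^{k-2}(q-2)$ for $q>2$. Feeding this into $\binom{|A|}{k}N_\sigma\le q^n$ together with the elementary inequality $\binom{|A|}{k}\ge(|A|/k)^k$ gives $(|A|/k)^k(q-1)^{k-2}(q-2)\le q^n$, which rearranges to exactly the stated bound $|A|\le k\,q^{n/k}\big((q-1)^{1-2/k}(q-2)^{1/k}\big)^{-1}$.

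For $q=2$ the scheme degenerates pleasantly: $\FF_2^\times=\{1\}$, so $\lambda$ is forced to be identically $1$, $\Phi(S,\lambda)=\sum_{a\in S}a$ is the ordinary sum, and $\Phi$ is injective precisely because $A$ is a weak $B_k$ set (the Proposition that $m$-general sets are weak $B_k$ sets applies for $k=\lfloor m/2\rfloor$). Thus $\binom{|A|}{k}\le 2^n$, and using the sharper estimate $\binom{|A|}{k}\ge(|A|-k+1)^k/k!$ gives $|A|\le(k!)^{1/k}2^{n/k}+k$. I expect the only real obstacle to be the injectivity of $\Phi$ when $q>2$: one must carefully handle overlapping subsets and the cancellations they cause, and confirm that the surviving relation is always a nontrivial affine dependence on at most $m$ distinct points. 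Everything afterward—the evaluation and lower bound of $N_\sigma$, and the two binomial inequalities—is routine.
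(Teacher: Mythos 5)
Your proposal is correct and takes essentially the same approach as the paper: an injective weighted $k$-sum map with nonzero coefficients summing to a fixed scalar, whose injectivity follows from affine independence of the at most $2k \le m$ points involved, followed by the same counting and binomial estimates. The only cosmetic differences are that you index the domain by $k$-subsets with coefficient functions rather than order-sorted tuples, and you evaluate $N_\sigma$ exactly via a character sum where the paper proves the lower bound $(q-1)^{k-2}(q-2)$ by a direct completion argument.
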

\begin{proof}


First assume that $q>2$. Let $\gamma$ be any fixed non-zero element of $\mathbb{F}_q$ and let $(\mathbb{F}_q, <)$ be any total ordering of $\mathbb{F}_q$. Let $\mathcal{C}_\gamma^*$ be the set of all sequences $(\alpha_1,\dots, \alpha_k) \in \mathbb{F}_q^{k}$ which satisfy $\sum_{j=1}^k \alpha_j = \gamma$ and where all $\alpha_j$ are nonzero.

Let $(\bx_1,\dots, \bx_k)$ be a sequence of $k$ distinct points in $A$ such that $\bx_1 < \bx_2 < \dots < \bx_k$ and let $(\by_1,\dots, \by_k)$ be a sequence of $k$ distinct points in $A$ such that $\by_1 < \by_2 < \dots < \by_k$ (we are not assuming that $\{\bx_1,\dots, \bx_k\}$ is disjoint or distinct from $\{\by_1,\dots, \by_k\}$). Now, since $A$ is an $m$-general set, we must have that $\{\bx_1,\dots, \bx_k, \by_1, \dots, \by_k\}$ is an affinely independent set. We claim that for any $(\alpha_1,\dots, \alpha_k)$ and $(\beta_1,\dots, \beta_k)$ in $\mathcal{C}_\gamma^*$, we must have that 
\begin{equation}\label{equations avoided}
 \sum_{j=1}^k \alpha_j \bx_j \neq  \sum_{j=1}^k \beta_j \by_j,
\end{equation}
unless $(\alpha_1,\dots, \alpha_k) = (\beta_1,\dots, \beta_k)$ and $(\bx_1,\dots, \bx_k) = (\by_1,\dots,\by_k)$. To see this, we consider two cases. In the first case, let $(\bx_1,\dots, \bx_k) = (\by_1,\dots, \by_k)$. Assume that $ \sum_{j=1}^k \alpha_j \bx_j = \sum_{j=1}^k \beta_j \bx_j$. Then we have 
\[
\sum_{j=1}^k (\alpha_j - \beta_j)\bx_j = \bzero.
\]
Since the set $\{\bx_1,\dots, \bx_k\}$ is affinely independent and $\sum (\alpha_j - \beta_j) = 0$, this implies that $\alpha_j = \beta_j$ for all $j$.


In the second case, if $(\bx_1,\dots, \bx_k) \not= (\by_1,\dots, \by_k)$, then 
\[
\sum_{j=1}^k \alpha_j  \bx_j - \sum_{j=1}^k \beta_j  \by_j
\]
is a nontrivial linear combination of at most $2k$ elements of $A$ where the coefficients sum to $0$. Since these points are affinely independent, this implies that 
\[
\sum_{j=1}^k \alpha_j  \bx_j - \sum_{j=1}^k \beta_j  \by_j \neq \bzero.
\]

Now for each $\mathbf{c}: = (\alpha_1,\dots, \alpha_k)\in \mathcal{C}_\gamma^*$, define a function $f_\mathbf{c}: (\mathbb{F}_q^n)^k \to \mathbb{F}_q^n$ by $f_\mathbf{c}(\bx_1,\dots, \bx_k) =  \sum_{j=1}^k \alpha_j \bx_j$. By \eqref{equations avoided}, this implies that 
\[
\left| \bigcup_{\mathbf{c}\in \mathcal{C}_\gamma^*} f_\mathbf{c}(A^k)\right| \geq |\mathcal{C}_\gamma^*|\binom{|A|}{k},
\]
where $f(A^k)$ denotes the image of $f$ restricted to inputs from $A$.

We show that $|\mathcal{C}_\gamma^*| \geq (q-1)^{k-2}(q-2)$. When $k=2$, we will show the inequality in a slightly more general setting where we may allow $\gamma$ to be $0$. To prove the inequality, choose any nonzero $a_1$; then there is a unique $a_2\in \mathbb{F}_q$ such that $a_1+a_2 = \gamma$. Exactly one choice of $a_1$ will correspond to $a_2 = 0$, namely, $a_1 = \gamma$. Therefore, when $\gamma \not= 0$ there are $q-2$ sequences of length $2$ in $\mathcal{C}_\gamma^*$ and when $\gamma=0$ there are $q-1$ sequences of length $2$ in $\mathcal{C}_0^*$, showing the inequality in either case. For larger $k$, choose $a_1,\dots, a_{k-2}$ arbitrarily from $\FF_q\setminus\{0\}$ and let $\eta = \gamma - (a_1 + \dots + a_{k-2})$. By the $k=2$ case {(applied to $\eta$)}, there are at least $q-2$ choices of $a_{k-1}$ and $a_k$ which complete to a sequence in $\mathcal{C}_\gamma^*$.

Since, for each $\bc \in \mathcal{C}_\gamma^*$, the outputs of $f_{\bc}$ live in $\mathbb{F}_q^n$, we have 
\[
(q-1)^{k-2}(q-2)\frac{|A|^k}{k^k} < (q-1)^{k-2}(q-2) \binom{|A|}{k} \leq q^n,
\]
which gives the result when $q>2$.

The proof when $q=2$ is similar but simpler. Every unordered set of $k$ elements of $A$ must have a unique sum, otherwise 
\[
(\mathbf{x}_1 + \cdots + \mathbf{x}_k) - (\mathbf{y}_1 + \cdots + \mathbf{y}_k) = (\mathbf{x}_1 + \cdots + \mathbf{x}_k) + (\mathbf{y}_1 + \cdots + \mathbf{y}_k)= \mathbf{0},
\]
contradicting that $A$ is an $m$-general set. Therefore, we have 
\[
\binom{|A|}{k} \leq 2^n.
\]
Using the inequality $\displaystyle\frac{(|A|-k)^k}{k!} \leq \binom{|A|}{k}$ and rearranging gives the result.
\end{proof}



\section{Comparison with Theorem~\ref{bennett theorem}}\label{comparision section}
In this section, we compare our main theorem to Theorem~\ref{bennett theorem} in two regimes: (1) when $q$ is fixed and $m$ and $n$ tend to infinity and (2) when $m$ is fixed.

Let $B_m(n,q)$ be the upper bound for $r_m(n,q)$ in Theorem~\ref{bennett theorem} (\cite[Theorem 1.2]{B}). 
That is
\[
B_m(n,q) = 2m + m\cdot \min_{t\in (0,1)}\left(t^{-\frac{q-1}{m}}\cdot \frac{1-q^t}{1-q}\right)^n.
\]

\subsection{$q$ fixed and $m, n \to \infty$}

In this section we fix a constant $q$ and consider what happens when $m$ is large. To compare $B_m(n,q)$ with Theorem~\ref{counting upper bound q large} in this regime, we first give a lower bound on $B_m(n,q)$. To do this, let 
\[
h_q(x) = x^{-\frac{q-1}{m}}\cdot\frac{1-x^q}{1-x}.
\]
As noted in \cite[Lemma~3.3]{B}, $h_q(x)$ is a convex function and so its minimum on $(0,1)$ occurs anywhere where its derivative vanishes in the interval. We have (c.f. \cite{B})
\[
h'_q(x) = \frac{x^{-\frac{q-1}{m}-1}}{m(1-x)^2}\cdot \left[(q+m-1)x - (q-1) - x^q((q-1)(m-1)(1-x)+m) \right].
\]

For $m$ larger than a constant which depends only on $q$, we have that 
\begin{align*}
    h'_q\left(\frac{1}{m}\right) & < 0\\
    h'_q\left(\frac{q}{m}\right) & >0.
\end{align*}
It follows that, for $m$ larger than a constant depending only on $q$,
\[
\min_{x\in (0,1)} h_q(x) = \min_{x\in (\frac{1}{m},\frac{q}{m})} h_q(x) \geq \min_{x\in (\frac{1}{m},\frac{q}{m})} x^{-\frac{q-1}{m}} = \left(\frac{q}{m}\right)^{-\frac{q-1}{m}}.
\]

Therefore, for $m$ large enough, we have 
\[
B_m(n,q) \geq m\left(\frac{m}{q}\right)^{(q-1)\frac{n}{m}}.
\]
As $m$ goes to infinity, we have 
\[
\frac{\log_q B_m(n,q)}{n} \geq \frac{\log_q m}{n} + \frac{q-1}{m}\left(\log_q m -1\right) = \Omega\left(\frac{\log_q m}{m}\right).
\]


Another way to say this is that the main result of \cite{B} shows that 
\[
\mu_m(q) \leq (q-1 + o_m(1)) \frac{\log_q m}{m},
\]
whereas Theorem~\ref{counting upper bound q large} shows
\begin{equation}\label{our upper bound m large}
\mu_m(q) \leq \frac{2+o_m(1)}{m}.
\end{equation}
We note that the best-known general lower bound on $\mu_m(q)$ is $\frac{1}{m-1}$, so \eqref{our upper bound m large} gives the correct dependence on $m$.

\subsection{$m \geq 4$ fixed}

We now consider the regime where $m$ is fixed. For small $m$ and small $q$, Bennett computes $B_m(n,q)$ to give bounds for $\mu_m(q)$. For the convenience of the reader, we provide these bounds in Table~\ref{table1}. When $m = 3$, the bounds on $\mu_3(q)$ were first given in \cite{EG}. 

Using Theorem~\ref{counting upper bound q large}, when $m \geq 4$ we obtain a nontrivial bound on $\mu_m(q)$ which is independent of $q$, namely
\[ \mu_m(q) \leq \frac{1}{\lfloor m/2 \rfloor}.
\]
Note that $\mu_m(q) \geq \mu_{m+1}(q)$ since an $(m+1)$-general set is automatically $m$-general. Table~\ref{table2} shows that this bound significantly improves upon previous results.

The fact that our bound on $\mu_m(q)$ is independent of $q$ also provides the first nontrivial bound when $m \geq 4$ is fixed and $q$ tends to infinity.
By \cite[Lemma 3.3]{B},
\[
B_m(n,q) = 2m + m\cdot\left(c_m \cdot q + O(1)\right)^n,
\]
where $c_m$ is a constant depending only on $m$. Therefore, we have that 
\[
\frac{\log_q B_m(n,q)}{n} \geq \frac{\log_q m}{n} + \log_q(c_m + O(1)) + 1 = 1-o_q(1).
\]
This tells us that for $m$ constant the main result of \cite{B} does not improve the trivial bound $\limsup_{q\to \infty} \mu_m(q) \leq 1$, whereas our theorem shows that 
\[
\limsup_{q\to \infty} \mu_m(q) \leq \frac{1}{\lfloor m/2 \rfloor}.
\]





\begin{table}
\centering
\begin{tabular}{ccccccccccc}
\hline
\multicolumn{2}{c}{\multirow{2}{*}{}}                              & \multicolumn{9}{c}{$q$}                               \\ \cline{3-11} 
\multicolumn{2}{c}{}                                               & 2    & 3    & 4    & 5    & 7    & 8    & 9    & 11 & $q \to\infty$   \\ \hline
\multicolumn{1}{c|}{}                     & \multicolumn{1}{c|}{3} &      & .923 &      & .930 & .935 &      & .938 & 0.941 & $1-\log_q(1.188)$ \\
\multicolumn{1}{c|}{}                     & \multicolumn{1}{c|}{4} & .813 & .821 & .829 & .836 & .846 & .851 & .854 & 0.861 & $1-\log_q(1.504)$ \\
\multicolumn{1}{c|}{\multirow{2}{*}{$m$}} & \multicolumn{1}{c|}{5} &      & .735 &      & .756 & .771 &      & .782 & 0.791 & $1-\log_q(1.853)$ \\
\multicolumn{1}{c|}{}                     & \multicolumn{1}{c|}{6} & .651 & .665 & .679 & .690 & .708 & .716 & .722 & 0.734 & $1-\log_q(2.212)$ \\
\multicolumn{1}{c|}{}                     & \multicolumn{1}{c|}{7} &      & .609 &      & .636 & .657 &      & .673 & 0.685 & $1-\log_q(2.577)$ \\
\multicolumn{1}{c|}{}                     & \multicolumn{1}{c|}{8} & .544 & .562 & .577 & .591 & .613 & .622 & .631 & 0.644 & $1-\log_q(2.944)$ \\ \hline
\end{tabular}

\caption{Upper bounds on $\mu_m(q)$ from \cite{B} and \cite{EG}.}
\label{table1}
\end{table}

\begin{table}
\centering
\begin{tabular}{ccc}
\hline
                                          &                        & $q$   any prime power   \\ \hline
\multicolumn{1}{c|}{\multirow{6}{*}{$m$}} & \multicolumn{1}{c|}{3} &             \\
\multicolumn{1}{c|}{}                     & \multicolumn{1}{c|}{4} & $.500$      \\
\multicolumn{1}{c|}{}                     & \multicolumn{1}{c|}{5} & $.500$      \\
\multicolumn{1}{c|}{}                     & \multicolumn{1}{c|}{6} & $.334$      \\
\multicolumn{1}{c|}{}                     & \multicolumn{1}{c|}{7} & $.334$      \\
\multicolumn{1}{c|}{}                     & \multicolumn{1}{c|}{8} & $.250$      \\ \hline
\end{tabular}

\caption{Upper bounds on $\mu_m(q)$ from Theorem~\ref{counting upper bound q large}. }
\label{table2}
    \end{table}

\section{Determining $\mu_4(2)$}\label{even section}

In \cite{B}, Bennett raises the case $q=2$ and $m=4$ as a ``particularly interesting case". As $2$-flats in $\mathbb{F}_2^n$ have exactly $4$ points, finding a $4$-general set in $\mathbb{F}_2^n$ of maximum size is the same as finding the largest subset which does not fully contain a $2$-flat. Bennett shows that $r_4(n,2) < 8 + 4(1.755)^n$, giving $\mu_4(2) < 0.813$. Theorem~\ref{even characteristic} determines $\mu_4(2)$ exactly and in this section we give its proof.

\begin{theorem}\label{even characteristic}
We have 
\[
\frac{1}{\sqrt{2}} 2^{n/2}\leq r_4(n,2) \leq 1+\sqrt{2}\cdot 2^{n/2},
\]
and hence 
\[
\mu_4(2) = \frac{1}{2}.
\]
\end{theorem}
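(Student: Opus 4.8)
The plan is to first convert the geometric condition into an arithmetic one and then treat the two inequalities separately. Over $\FF_2$ a $2$-flat is precisely a set of four distinct points summing to $\bzero$, so by Theorem~\ref{thm.general.eqns} a set $A \subseteq \FF_2^n$ is $4$-general if and only if no four distinct elements of $A$ sum to $\bzero$. (The coefficient vectors in $\mathcal{C}_0^4$ over $\FF_2$ all have even weight; those of weight $2$ are weakly avoided automatically by distinctness, leaving only $(1,1,1,1)$.) In characteristic $2$ this is exactly the statement that $A$ is a Sidon (i.e.\ $B_2$) set: if $\ba+\bb=\bc+\bd$ with the pairs each distinct and $\{\ba,\bb\}\neq\{\bc,\bd\}$, then $\ba+\bb+\bc+\bd=\bzero$ with all four distinct, and conversely. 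Thus $r_4(n,2)$ is the maximum size of a Sidon set in $\FF_2^n$, and I would prove both displayed inequalities in that language.

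For the upper bound I would count pairwise sums. If $A$ is Sidon then the $\binom{|A|}{2}$ sums $\ba+\bb$ over unordered pairs of distinct elements are all distinct, and each is nonzero, so
\[ \binom{|A|}{2} \leq 2^n - 1. \]
Writing $s=|A|$ and using $(s-1)^2 \leq s(s-1) = 2\binom{s}{2} \leq 2^{n+1}-2 < 2^{n+1}$ gives $s-1 < \sqrt2\cdot 2^{n/2}$, that is, $|A| \leq 1 + \sqrt2\cdot 2^{n/2}$. This sharpens the $q=2$ case of Theorem~\ref{counting upper bound q large} by replacing the estimate $(s-k)^k/k!\le\binom{s}{k}$ with the tighter $(s-1)^2\le s(s-1)$ and by exploiting that the pairwise sums are nonzero.

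For the lower bound I would exhibit a Sidon set of the required size from the field structure. Write $n=2\ell$ (the even case) and identify $\FF_2^n \cong \FF_{2^\ell}\times\FF_{2^\ell}$; take
\[ A = \{(x,\,x^3) : x \in \FF_{2^\ell}\}, \]
so $|A| = 2^\ell = 2^{n/2}$. To verify that $A$ is Sidon, suppose $x_1+x_2 = x_3+x_4 =: s$ with $x_1\neq x_2$ (hence $s\neq 0$) and $x_1^3+x_2^3 = x_3^3+x_4^3$. Using the characteristic-$2$ identity $x^3+y^3 = (x+y)\bigl((x+y)^2 + xy\bigr)$, the second equation becomes $s(s^2+x_1x_2) = s(s^2+x_3x_4)$, so $x_1x_2 = x_3x_4$; having equal sum and equal product forces $\{x_1,x_2\}=\{x_3,x_4\}$, so $A$ is Sidon. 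For odd $n$ I would apply the same construction inside an $(n-1)$-dimensional coordinate subspace (a Sidon set stays Sidon in any larger space), obtaining $|A| = 2^{(n-1)/2} = \tfrac{1}{\sqrt2}\,2^{n/2}$. Either way $r_4(n,2)\geq \tfrac{1}{\sqrt2}\,2^{n/2}$.

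Finally, taking $\log_2$ of the two bounds and dividing by $n$ sends both sides to $\tfrac12$ as $n\to\infty$, yielding $\mu_4(2)=\tfrac12$. The main obstacle is the lower bound: the upper bound and the reduction to Sidon sets are routine, but producing a Sidon set of size $\Theta(2^{n/2})$ requires the right algebraic construction, and the crux is checking the Sidon property for the graph of $x\mapsto x^3$ (equivalently, that $x\mapsto x^3$ is an almost perfect nonlinear function). A linear or Frobenius map such as $x\mapsto x^2$ fails, since then $x_1^2+x_2^2=(x_1+x_2)^2$ is already determined by the first coordinates, so the genuine nonlinearity of the cube is what makes the argument work.
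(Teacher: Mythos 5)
Your proposal is correct and follows essentially the same route as the paper: reduce to Sidon sets in $\FF_2^n$ via Theorem~\ref{thm.general.eqns}, bound $|A|$ by counting distinct pairwise sums, and obtain the lower bound from the graph of $x \mapsto x^3$ over $\FF_{2^{n/2}}$ (your factorization identity is just an inlined version of the paper's verification that $x^3$ is almost perfect nonlinear). If anything, your direct count $\binom{|A|}{2} \leq 2^n - 1$ yields exactly the stated constant $1 + \sqrt{2}\cdot 2^{n/2}$, marginally sharper than what the paper's citation of Theorem~\ref{counting upper bound q large} literally gives, namely $2 + \sqrt{2}\cdot 2^{n/2}$.
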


Before we give the proof, we establish some notation. Let $\kk$ be a finite field. A function $f: \kk\to \kk$ is called {\em almost perfect nonlinear} if for any $a,b\in \kk$ with $a\neq 0$, the equation
\[
f(x+a) - f(x) = b
\]
has at most two solutions. When $\kk$ has characteristic $2$, if $x$ is a solution then $x+a$ is also a solution, and so the upper bound of $2$ solutions is best possible. Almost perfect nonlinear functions have applications in cryptography \cite{BScrypt, Nyberg} and coding theory \cite{CCZ}. They can also be used to construct association schemes and strongly regular graphs \cite{vDFDF, vDFDF2} and semi-biplanes \cite{CH}. For more background and applications of almost perfect nonlinear functions, we recommend \cite{maxwellthesis}.

\begin{proof}[Proof of Theorem~\ref{even characteristic}]
The upper bound follows from Theorem \ref{counting upper bound q large} when $m=4$ and $q=2$. 

By Theorem~\ref{thm.general.eqns}, a set $A\subseteq \mathbb{F}_2^n$ is $4$-general if and only if it weakly avoids all equations 
\[
c_1  \bx_1 + c_2   \bx_2 + c_3   \bx_3 + c_4  \bx_4=0,
\]
that satisfy $c_1 + c_2 + c_3 + c_4 = 0$ with the $c_i$ not identically zero. Over $\mathbb{F}_2$, if $c_1 + c_2+c_3+c_4 = 0$, then either exactly two or exactly four of the coefficients are $1$. When exactly two coefficients are $1$, every set weakly avoids the equation, and so determining $r_4(n,2)$ is equivalent to determining the maximum size of a subset $A$ in $\mathbb{F}_2^n$ such that for any $\ba, \bb, \bc, \bd \in A$, if $\ba + \bb = \bc + \bd$ it implies that $\{\ba, \bb\} = \{\bc, \bd\}$ or $\ba = \bb$ and $\bc = \bd$. We will call this a Sidon set but we note that since we are in even characteristic we are also considering $\ba + \ba = \bc + \bc$ to be a trivial solution.


For the lower bound, we construct $4$-general sets using almost perfect nonlinear functions. First let $n$ be even. Since $\mathbb{F}_2^n$ is additively isomorphic to $\mathbb{F}_{2^{n/2}} \times \mathbb{F}_{2^{n/2}}$, we will construct a Sidon set in $\mathbb{F}_{2^{n/2}} \times \mathbb{F}_{2^{n/2}}$. If $f: \mathbb{F}_{2^{n/2}} \to \mathbb{F}_{2^{n/2}}$ is an almost perfect nonlinear function, then the set $A = \{(x, f(x)): x\in \mathbb{F}_{2^{n/2}}\}$ is a Sidon set. To see this (see also \cite{CP}), if 
\[
(x, f(x)) + (y,f(y)) + (w, f(w)) + (z,f(z))= (0,0),
\]
then 
\begin{align*}
x+z &= y+w\\
f(x) + f(z) &= f(y) + f(w).
\end{align*}
If $x=z$ then $y=w$ and this is a trivial solution, so assume that $a\not=0$ is defined so that $x = z+a$. Then $y+w=a$ and (noting that we are in characteristic $2$)
\[
f(z+a) - f(z) = f(w+a) - f(w).
\]
Since $f$ is almost perfect nonlinear, we must have $z = w$ or $z=w+a$, and so assume that $z=w+a$. But this mean $x=z+a = w$ and so this is a trivial solution again. 
    
Thus, if $f$ is almost perfect nonlinear, then there exists a Sidon set of size $2^{n/2}$ in $\mathbb{F}_{2^{n/2}} \times \mathbb{F}_{2^{n/2}}$. Almost perfect nonlinear functions exist for every $n$ \cite{maxwellthesis}. For completeness, we show that $f(x) = x^3$ is almost perfect nonlinear. Fix $a, b \in \FF_{2^{n/2}}$ 
with $a \neq 0$. Since we are in characteristic $2$, the equation
\[
(x+a)^3 - x^3 = b
\]
simplifies to 
\[
ax^2 + a^2x = b.
\]
Since this is quadratic in $x$ it has at most $2$ solutions.
Hence, for even $n$, $r_4(n,2) \geq 2^{n/2}$. Since $r_4(n,2)$ is monotone in $n$, we have proved the lower bound.
\end{proof}

\section{Concluding remarks}\label{conclusion}
In this paper, we improved previous bounds on $r_m(n,q)$ for most choices of $m,n,q$ and gave a precise estimate in the case that $m=4$ and $q=2$. One case that our argument does not cover is the capset problem, when $m=3$. 

One possible approach to making progress on the capset problem is to generalize the function $r_{m}(n,q)$. In extremal graph theory, the {\em Tur\'an number} of a graph $F$ is the maximum number of edges in an $n$ vertex $F$-free graph, and is denoted by $\mathrm{ex}(n, F)$. This was recently systematically generalized by Alon and Shikhelmen \cite{AS} to the function $\mathrm{ex}(n, H, F)$ which denotes the maximum number of copies of a graph $H$ in an $n$ vertex $F$-free graph. This function has been studied extensively since being introduced, helping us to understand the structure of $F$-free graphs. One could analogously study a generalization of $r_{m}(n, q)$ by fixing $s$ and $t$ and asking for the maximum number of sets of size $s$ contained in a $t$-flat which are in a set $A$ that is $m$-general. This combined with a saturation result (that there are many sets of $m$ points not in general position if $A$ larger than $r_{m}(n,q)$) could potentially improve bounds on the capset problem. A similar argument was sketched in \cite{FP2} to give bounds on the maximum size of a subset of $AG(n,3)$ that does not contain an entire $m$-flat.

One way to generalize sets with no $3$-term arithmetic progression in $\mathbb{F}_q^n$ is to ask for the maximum size of a set with no $k$-term arithmetic progression for some $k\geq 3$. This was studied in \cite{LW} and suggests another interesting way to generalize the study of $r_m(n,q)$. Instead of asking for the maximum size of a set avoiding $m$ points in an $(m-2)$-flat, one could fix $m$ and $t\leq m-2$ and ask for the maximum size of a set with no $m$ points that all lie on a $t$-flat.


Above, we showed that for both $m$ fixed and $q \to \infty$ or for $q$ fixed and $m \to \infty$, we have that $\mu_m(q)$ is bounded by roughly $\frac{1}{m-1}$ and $\frac{2}{m}$. Closing this factor of $2$ for any choice of $m$ and $q$ would be very interesting. It is likely that for fixed $m$ and $q$, the lower bound can always be improved. When $m$ is even, it is possible that the upper bound is correct.

In the specific case when $q=2$ and $m=4$ there is also a factor of $2$ difference between the upper and lower bound in Theorem~\ref{even characteristic}. It is reasonable to guess that $r_{4}(n,2) = 2^{n/2}$ when $n$ is even. It would also be interesting to determine better bounds on $r_m(n,2)$ for $m>4$. 








\subsection*{Acknowledgments}
 We thank Craig Timmons for leading us to references on almost perfect nonlinear functions and  Lily Tait for inspiration.

\bibliographystyle{plain}
\bibliography{biblio}

\end{document}